\documentclass[oneside,11pt]{amsart}
\usepackage{amsmath}
\usepackage{amsthm}

\usepackage[T1]{fontenc}

\usepackage[small]{caption}


\usepackage{amsfonts}
\usepackage{amssymb}
\usepackage{amsxtra}

\usepackage[all]{xy}
\SelectTips{cm}{12}

\newtheorem{thm}{Theorem}[section]

\newtheorem*{PropLCZBoundary}{Proposition~\ref{prop:LCZBoundary}}
\newtheorem{prop}[thm]{Proposition}

\newtheorem{cor}[thm]{Corollary}
\newtheorem{conj}[thm]{Conjecture}
\theoremstyle{definition}
\newtheorem{defn}[thm]{Definition}
\newtheorem{rem}[thm]{Remark}

\newtheorem{exmp}[thm]{Example}

\newcommand{\boundary}{\partial}

\newcommand{\bigpresentation}[2]{ \bigl\langle \, {#1} \bigm| {#2} \,
                                  \bigr\rangle }

\newcommand{\bigset}[2]{ \bigl\{ \, {#1} \bigm| {#2} \, \bigr\} }
\renewcommand{\emptyset}{\varnothing}
\renewcommand{\setminus}{-}

\newcommand{\field}[1]{\mathbb{#1}}
\newcommand{\Z}{\field{Z}}
\newcommand{\R}{\field{R}}

\DeclareMathOperator{\CAT}{CAT}


\usepackage{combelow}

\hyphenation{geo-desic geo-des-ics quasi-convex quasi-convex-ity
             quasi-geo-desic quasi-geo-des-ics
             state-ment prop-o-si-tion equi-vari-ant equi-vari-antly
             pa-ram-e-trized Rie-mann-ian semi-stable semi-sta-bil-ity }


\usepackage{graphics}
\usepackage{color}
\usepackage{pinlabel}


\usepackage{ifthen}

\newcommand{\showcomments}{yes}

\newsavebox{\commentbox}
%
{\ifthenelse{\equal{\showcomments}{yes}}%
{\footnotemark
    \begin{lrbox}{\commentbox}
    \begin{minipage}[t]{1.25in}\raggedright\sffamily\upshape\tiny
    \footnotemark[\arabic{footnote}]}
{\begin{lrbox}{\commentbox}}}%
{\ifthenelse{\equal{\showcomments}{yes}}%
{\end{minipage}\end{lrbox}\marginpar{\usebox{\commentbox}}}
{\end{lrbox}}}


\begin{document}

\title[An introduction to semistability]{%
An introduction to semistability in geometric group theory}

\author[G.C.~Hruska]{G.~Christopher Hruska}
\address{Department of Mathematical Sciences\\
         University of Wisconsin--Milwaukee\\
         PO Box 413\\
         Milwaukee, WI 53211\\
	 USA}
\email{chruska@uwm.edu}

\author[K.~Ruane]{Kim Ruane}
\address{Department of Mathematics\\
         Tufts University\\
         Medford, MA 02155\\
	 USA}
\email{kim.ruane@tufts.edu}

\begin{abstract}

A finitely presented group is semistable at infinity if all proper rays in the Cayley $2$--complex are properly homotopic.  A long standing open question asks whether all finitely presented groups are semistable at infinity. This article provides a brief introduction to the notion of semistability at infinity in geometric group theory. We discuss techniques for proving semistability at infinity that involve either the topology of the boundary or the existence of certain hierarchies of splittings of the group.

As an illustration of the second technique, we apply a combination theorem of Mihalik--Tschantz to prove semistability at infinity for groups that are hyperbolic relative to polycyclic subgroups using work of Mihalik--Swenson and Louder--Touikan.
\end{abstract}

\keywords{Relatively hyperbolic groups, semistable at infinity, hierarchies}

\subjclass[2010]{%
20F67, 
20E08} 

\date{\today}

\maketitle

\section{Introduction}
\label{sec:Introduction}

In geometric group theory it is often useful to characterize the shape of a noncompact space ``near infinity.''
One-ended spaces are, in a sense, connected at infinity.
The notion of semistability at infinity is analogous to being path connected at infinity.  If $X$ is semistable at infinity, then it has a canonical fundamental group at infinity. Informally the fundamental group at infinity of a space $X$ detects nontrivial loops in the part of $X$ that is ``near infinity.''

Let $X$ be a one-ended, locally finite, simply connected CW complex. Formally, a space $X$ is \emph{semistable at infinity} if all proper rays in $X$ are properly homotopic. A \emph{proper ray} in $X$ is a map $[0,\infty) \to X$ that is proper in the sense that the preimage of any compact set is compact.  Two such rays are \emph{properly homotopic} if they are connected by a homotopy that is a proper map.
The ordinary fundamental group of a space $Y$ is defined with respect to a choice of basepoint, and may give different results for basepoints not in the same path component of $Y$. The fundamental group at infinity of $X$ is defined with respect to a choice of proper ray, which plays the role of the basepoint, and may give different results for proper rays that are not properly homotopic.
See Section~\ref{sec:FundamentalGroup} for more details. 

More generally one could study locally finite, simply connected CW complexes $X$ that are not one ended by restricting the above discussion to a single end of $X$.
One says that a multi-ended space is semistable at infinity if any two proper rays converging to the same end are properly homotopic.

A finitely presented group $G$ is \emph{semistable at infinity} if the universal cover $\tilde{X}$ of a presentation $2$--complex $X$ for $G$ is semistable at infinity.
Many significant groups of interest are known to be semistable at infinity, including virtually polycyclic groups \cite{Mihalik83}, word hyperbolic groups \cite{BestvinaMess91,Swarup96,Levitt98,Bowditch99Treelike}, finitely generated Coxeter groups and Artin groups \cite{Mihalik96_ArtinCoxeter}, Baumslag--Solitar groups \cite{Mihalik83}, almost all mapping class groups \cite{Harer86}, $\text{Out}(F_n)$ for $n\ge 2$ \cite{BestvinaFeighn00_OutFn}, $\text{GL}_n(\Z)$ for $n\ge 2$ \cite{Vogtmann95}, and many others.
In fact, there are no known examples of a finitely presented group that is not semistable at infinity.
The general question of whether all finitely presented groups are semistable at infinity has been promoted by Geoghegan and Mihalik for over 35 years, and appears to be quite difficult in general (see for example \cite{Mihalik83}).

The main goal of this paper is to discuss the general ideas behind semistability at infinity and some historical background, to give several natural consequences of semistability in quite different settings, and to exhibit several methods for recognizing when a group is semistable at infinity.

In Section~\ref{sec:FundamentalGroup}, we introduce the fundamental group at infinity.
Section~\ref{sec:Significance} discusses inverse sequences, aspherical manifolds, and group cohomology.
In Section~\ref{sec:Examples}, we focus on interpretations of semistability in the setting of $\CAT(0)$ spaces.
Section~\ref{sec:hierarchies} focuses on graph of groups splittings, hierarchies of splittings, and an application to relatively hyperbolic groups.

The application uses results of Mihalik--Tschantz \cite{MihalikTschantz92}, Louder--Touikan \cite{LouderTouikan17} and Mihalik--Swenson \cite{MihalikSwenson} to prove the following.\footnote{Since the circulation of the initial version of this article, Haulmark--Mihalik have shown the following: If $(G,\mathbb{P})$ is relatively hyperbolic, $G$ is finitely presented and each $P\in \mathbb{P}$ is semistable at infinity then $G$ is semistable at infinity \cite{HaulmarkMihalik}.  Although this result generalizes Theorem~\ref{thm:MainThm}, the proof is quite different from the one given in this article.}
For technical reasons the results of Louder--Touikan only apply under the assumption that the group has no non-central element of order two.

\begin{thm}
\label{thm:MainThm}
Let $(G,\mathbb{P})$ be relatively hyperbolic with no non-central element of order two.
Assume each peripheral subgroup $P \in \mathbb{P}$ is slender and coherent and all subgroups of $P$ are semistable.
Then $G$ is semistable.
\end{thm}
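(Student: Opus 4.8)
The plan is to reduce the general case to the atomic case of Mihalik--Swenson by induction along a hierarchy, using the Louder--Touikan accessibility theorem to guarantee that the hierarchy terminates and the Mihalik--Tschantz combination theorem to propagate semistability across each splitting. The overall strategy is as follows. First I would set up the inductive framework: given a relatively hyperbolic pair $(G,\mathbb{P})$ satisfying the hypotheses, I would invoke the hierarchical accessibility theorem of Louder--Touikan to obtain a hierarchy of $G$ in which $G$ is built up from atomic pieces (and the peripheral subgroups) by a finite sequence of splittings over slender subgroups. The hypothesis that $G$ has no non-central element of order two is precisely what is needed to apply Louder--Touikan. The edge groups arising in these splittings are parabolic or slender, and since the peripheral subgroups $P$ are slender, their subgroups are finitely generated; coherence of $P$ then guarantees that these edge groups and the relevant vertex groups are finitely presented, so that semistability is even a meaningful notion at each stage.

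Next I would prove semistability by induction on the complexity of the hierarchy. The base case consists of the atomic relatively hyperbolic groups, which are semistable by the theorem of Mihalik--Swenson, together with the peripheral subgroups and their subgroups, which are assumed to be semistable and whose semistability is inherited by coherence. For the inductive step, $G$ splits as a finite graph of groups whose vertex groups are lower in the hierarchy (hence semistable by the inductive hypothesis) and whose edge groups are slender, in particular finitely generated and, by coherence, finitely presented. Here I would apply the combination theorem of Mihalik--Tschantz, which asserts that a group acting on a tree with finitely presented, semistable vertex stabilizers and suitably controlled edge stabilizers is itself semistable. Checking that the edge groups satisfy whatever finiteness and semistability hypotheses Mihalik--Tschantz require is where slenderness and coherence of the peripherals enter decisively: slenderness forces edge groups to be finitely generated, and their subgroups are again finitely generated, while coherence upgrades finite generation to finite presentation and one expects semistability of these small (e.g.\ polycyclic-type) edge groups to be automatic.

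The main obstacle I anticipate is the interface between the two external inputs: the hierarchy produced by Louder--Touikan must be arranged so that at every stage the splitting is of the precise form required by Mihalik--Tschantz's combination theorem, and so that the vertex and edge groups remain within the class of groups to which the inductive hypothesis and the atomic base case apply. Concretely, one must verify that peripheral structures are respected along the hierarchy---that a vertex group appearing in a splitting of $(G,\mathbb{P})$ is itself relatively hyperbolic with respect to an induced peripheral structure inheriting slenderness and coherence---so that the atomic pieces really are atomic relatively hyperbolic groups in the sense of Mihalik--Swenson, and so that the induction is well-founded. Managing this bookkeeping, and confirming that the edge groups meet the combination theorem's hypotheses (finite presentation and the relevant properness or semistability condition), is the technical heart of the argument; the remaining steps are then a formal induction.
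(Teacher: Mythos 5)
Your overall strategy---Louder--Touikan hierarchy, Mihalik--Swenson for the terminal pieces, Mihalik--Tschantz to reassemble---is exactly the paper's. But there is a genuine gap at the step you defer as ``bookkeeping'': the Louder--Touikan theorem does \emph{not} produce atomic pieces, contrary to your first paragraph. Its terminal vertex groups $(H,\mathbb{Q})$ are only guaranteed to admit no slender splittings \emph{relative to} $\mathbb{Q}$ (i.e., splittings in which each $Q\in\mathbb{Q}$ fixes a vertex of the tree), and each $Q$ is only known to be infinite, not two-ended, and parabolic in $(G,\mathbb{P})$. Atomicity in the sense of Mihalik--Swenson demands strictly more: $H$ one-ended, every $Q$ one-ended, and no splitting of $H$ over any parabolic subgroup whatsoever, relative or not. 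The distance between ``no splitting relative to $\mathbb{Q}$'' and ``no splitting at all'' is precisely the issue that Examples~\ref{exmp:NotAtomic1} and~\ref{exmp:Necklaces} are constructed to exhibit, and it is the reason hierarchies (rather than a single application of accessibility) are needed in the first place; it cannot be treated as routine.

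Closing that gap requires three concrete arguments, all absent from your proposal. First, each $Q\in\mathbb{Q}$ is a subgroup of some $P\in\mathbb{P}$, hence slender and coherent; an infinite, slender, coherent group that is not two-ended must be one-ended, so every $Q$ is one-ended. Second, $H$ itself is one-ended: any nontrivial splitting of $H$ over a finite group would either be relative to $\mathbb{Q}$ (excluded, since finite groups are slender) or would induce a nontrivial splitting of some one-ended $Q$ over a finite group, which is impossible. Third---and this is the key imported fact---once every $Q$ is one-ended, Bowditch's result \cite[Prop.~5.2]{Bowditch01} upgrades ``no parabolic splitting relative to $\mathbb{Q}$'' to ``no parabolic splitting at all,'' so $H$ really is atomic and Mihalik--Swenson applies. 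Two smaller corrections to your inductive step: Mihalik--Tschantz needs only finitely generated edge groups together with finitely presented, semistable vertex groups (no semistability hypothesis on edge groups is needed, so your worry there is vacuous); and finite presentability of the intermediate vertex groups of the hierarchy does not follow from coherence of the peripherals, since those vertex groups need not be subgroups of peripherals. It follows instead by induction down the hierarchy from the fact that a finitely presented group split over finitely presented edge groups has finitely presented vertex groups, with $G$ itself finitely presented by Osin's theorem \cite{Osin06} because the peripherals are.
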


A group is \emph{slender} if all of its subgroups are finitely generated, and it is \emph{coherent} if all finitely generated subgroups are finitely presented. The only slender, coherent groups that the authors are aware of are the virtually polycyclic groups.
Since virtually polycyclic groups are known to be semistable by \cite{Mihalik83} (and all of their subgroups are again virtually polycyclic) the following corollary is immediate.

\begin{cor}
\label{cor:Polycyclic}
Let $(G,\mathbb{P})$ be relatively hyperbolic with no non-central element of order two.
If each $P \in \mathbb{P}$ is virtually polycyclic, then $G$ is semistable.
\end{cor}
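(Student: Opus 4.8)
The plan is to obtain the corollary directly from Theorem~\ref{thm:MainThm}. The hypothesis forbidding non-central elements of order two is shared verbatim by both statements, so it carries over with nothing to check. It therefore suffices to verify that the three conditions imposed on the peripheral subgroups in the theorem---slenderness, coherence, and semistability of all subgroups---hold automatically once each $P \in \mathbb{P}$ is virtually polycyclic.

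First I would record the single structural fact on which everything rests: a subgroup of a virtually polycyclic group is again virtually polycyclic. This follows because polycyclic groups satisfy the maximal condition on subgroups (they are Noetherian), so every subgroup of a polycyclic group is finitely generated, hence polycyclic. If a virtually polycyclic group has a polycyclic normal subgroup $N$ of finite index and $H$ is any subgroup, then $H \cap N$ has finite index in $H$ and is polycyclic, whence $H$ is finitely generated and itself virtually polycyclic. In particular every subgroup of each $P$ is finitely generated, so each $P$ is slender.

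Next I would deduce coherence and semistability from the same closure property. Since every subgroup of a virtually polycyclic group is again virtually polycyclic, and polycyclic groups are finitely presented, every finitely generated subgroup of $P$ is finitely presented; thus each $P$ is coherent. Likewise every subgroup of $P$ is virtually polycyclic, and virtually polycyclic groups are semistable by \cite{Mihalik83}, so all subgroups of $P$ are semistable. With all hypotheses of Theorem~\ref{thm:MainThm} now confirmed, its conclusion yields that $G$ is semistable. There is no genuine obstacle at this stage: the entire weight of the argument lies in the Main Theorem, and the corollary is merely the result of invoking standard structural facts about polycyclic groups to discharge its hypotheses.
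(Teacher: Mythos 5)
Your proposal is correct and follows the paper's own route exactly: the paper derives the corollary immediately from Theorem~\ref{thm:MainThm} by noting that virtually polycyclic groups are slender and coherent, that the class is closed under passing to subgroups, and that such groups are semistable by \cite{Mihalik83}. You have simply spelled out the standard structural facts (the maximal condition and finite presentability of polycyclic groups) that the paper leaves implicit.
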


\subsection{Acknowledgements}
The authors are grateful to Genevieve Walsh and Lars Louder for helpful conversations about hierarchies.
We also thank the referee for useful feedback.
This work was partially supported by a grant from the Simons Foundation (\#318815 to G. Christopher Hruska).

\section{The fundamental pro-group at infinity}
\label{sec:FundamentalGroup}

This section contains a sketch of the definition of the fundamental pro-group at infinity of a one-ended, locally finite, simply connected CW complex.
The fundamental pro-group is defined in terms of inverse sequences of groups, considered up to a certain equivalence relation.  The equivalence classes are called pro-groups.
In this section we focus on a definition that involves a single inverse sequence, but the construction depends on certain arbitrary choices.  We highlight the significance of proper rays and proper homotopies in this definition.  We return to the well-defined issue in the next section, where we discuss the equivalence relation used in the definition of a pro-group.

For more a complete treatment of fundamental pro-groups at infinity, we refer the reader to the extensive survey article by Guilbault \cite{Guilbault16} and the thorough textbook by Geoghegan \cite{Geoghegan08}.

\begin{exmp}
We begin with the example of $\R^n$, which is one-ended whenever $n\ge 2$.
The complement of a large metric ball centered at the origin in the plane $\R^2$ is an open annulus, which has infinite cyclic fundamental group.  When one increases the radius, the natural inclusions of annuli induce isomorphisms on the fundamental group.  Informally, the loops in the plane that lie near infinity (considered modulo homotopies that also lie near infinity) form an infinite cyclic group.  (We are temporarily ignoring basepoint issues here.)
Formally, the fundamental pro-group at infinity is represented by an infinite sequence of infinite cyclic groups connected by isomorphisms.
In this case, one says that the fundamental pro-group is ``stably isomorphic to $\Z$.''

On the other hand, the analogous construction in $\R^n$ for $n\ge 3$ leads to an infinite sequence of trivial groups. In this higher dimensional case, we say that the space is ``stably trivial'' or ``simply connected at infinity.''
\end{exmp}

\begin{rem}
The fundamental pro-group at infinity involves the portion of a space that lies ``near infinity,'' but it does not directly involve any compactification by a boundary at infinity.  The actual definition of the fundamental pro-group at infinity generalizes the above examples.
\end{rem}

Let $X$ be a one-ended, locally finite, simply connected CW complex with a fixed basepoint $x_0$.  Consider an exhaustion of $X$ by compact sets $\{K_n\}$ such that the closure of $K_n$ is contained in the interior of $K_{n+1}$ for each $n=1,2,3,\dots$.
Let $U_n$ be the complement $X - K_n$.
We naturally obtain a nested sequence of open subspaces with inclusions
\[
   U_1 \longleftarrow U_2 \longleftarrow 
   \cdots \longleftarrow U_k \longleftarrow\cdots
\]
The fundamental pro-group at infinity of $X$ is represented by an inverse sequence of groups, where the $n$th group in the sequence is the fundamental group of $U_n$.
A basic difficulty becomes immediately apparent, since fundamental groups require a choice of basepoint.  However the intersection $\bigcap U_i$ is empty, so there does not exist a single point that could serve as the common basepoint of every $U_i$.

We can fix this problem by choosing a basepoint $x_i$ in the unbounded component of $U_i$ for each $i$.  In order to define a map of groups induced by the inclusion $U_i \to U_{i-1}$ we also need to choose a path from $x_i$ to $x_{i-1}$ that can be used for change of basepoint.  Ideally we would choose this connecting path within the open set $U_{i-1}$.
Such choices allow one to define an inverse sequence of groups
\[
   \pi_1(U_1,x_1) \longleftarrow
   \pi_1(U_2,x_2) \longleftarrow \cdots
   \longleftarrow
   \pi_1(U_n,x_n) \longleftarrow \cdots
\]
where the bonding maps are induced by change of basepoint along the chosen paths.  We discuss inverse sequences of groups more carefully in Section~\ref{sec:Significance}.
The path $r \colon [0,\infty) \to X$ formed by concatenating the chosen paths is a proper ray in $X$.
This inverse sequence of groups represents the \emph{fundamental pro-group at infinity} of $X$ with respect to the \emph{base ray} $r$.

In general it is more natural to start with an arbitrary proper ray $r$, and use it to define a sequence of basepoints and connecting paths in the following manner.
Let $r$ be a proper ray with $r(0)=x_0$.
For each $i$ choose a positive number $t_i$ such that the image $r[t_i,\infty)$ lies inside $U_i$, and choose $x_i = r(s_i)$ for some $s_i >t_i$.
For convenience we let $s_0=0$, and we choose $s_i > s_{i-1}$ for each $i$.

We have been a bit informal with the details above, since the construction described here seems to depend on various arbitrary choices such as the initial basepoint $x_0$, the proper ray $r$, and the sequence of basepoints $x_i \in U_i$.
A more careful treatment reveals that many of these choices ultimately do not matter and lead to inverse sequences that are equivalent in a natural sense.
However the choice of proper ray is highly significant.
It turns out that the inverse sequences of fundamental groups with respect to different proper rays $r$ and $r'$ give equivalent inverse sequences whenever the rays $r$ and $r'$ are joined by a proper homotopy.  

Recall that a one-ended space $X$ is semistable at infinity if all proper rays in $X$ are properly homotopic.
In this case, if $X$ is semistable at infinity, it has a canonical fundamental pro-group at infinity.
As mentioned in the introduction, a multi-ended space is semistable at infinity if any two proper rays converging to the same end are properly homotopic.
This property implies that each end of $X$ has a well-defined fundamental pro-group at infinity.

\section{Significance of semistability}
\label{sec:Significance}

This section focuses on the fundamental group at infinity, as an inverse sequence of groups.
We examine several basic algebraic flavors of inverse limits, and their topological interpretations for the fundamental group at infinity (see Theorem~\ref{thm:GeometricFlavors}). We discuss a characterization of semistability in terms of inverse sequences whose bonding maps are all surjective, see Theorem~\ref{thm:MittagLeffler}.

One of the most well-known examples of a non-semistable space is Whitehead's contractible open $3$--manifold, see below. Although it is not semistable at infinity, the Whitehead manifold also does not admit a proper, cocompact group action (see Theorem~\ref{thm:Wright}).
This naturally leads to semistability conjectures for closed, aspherical manifolds and for finitely presented groups (Theorems \ref{conj:SemistableManifold} and~\ref{conj:SemistableFP}).
We also mention a consequence of semistability regarding the structure of $H^2(G; \Z G)$.

\begin{defn}
An \emph{inverse sequence} of groups $(G_i, p_i)$ is a sequence of groups and homomorphisms (called \emph{bonding maps})
\[
   G_0 \xleftarrow[\qquad]{p_1} G_1 \xleftarrow[\qquad]{p_2} G_2 \xleftarrow[\qquad]{p_3} \cdots
\]
The \emph{inverse limit} of an inverse sequence is the group
\[
   \varprojlim (G_i,p_i) = 
   \bigset{(g_0,g_1,g_2,\dots)\in \prod G_i}{p_i(g_i) = g_{i-1}}
\]
\end{defn}

The fundamental group at infinity of a space $X$ with respect to a base ray $r$ is an inverse sequence of group that depends on a choice of exhaustion $\{K_m\}$ of $X$ by compact sets.
If one were to choose a different exhaustion $\{L_n\}$, the two inverse sequences can be related in a natural way.

First note that any compact set of $X$ must be contained in $K_m$ for some $m$ and likewise, contained in $L_n$ for some $n$. 
Therefore there exist subsequences $\{K_{m_i}\}$ and $\{L_{n_i}\}$ of the given exhaustions that interleave with each other in the sense that $K_{m_i} \subset L_{n_i}$ and $L_{n_i} \subset K_{m_{i+1}}$ for all $i$.
Taking complements, as in the definition of fundamental group at infinity, we see that the two inverse sequences obtained from the two exhaustions are equivalent in the following sense.

\begin{defn}
Two inverse sequences $(G_i,p_i)$ and $(H_j,q_j)$ are \emph{equivalent}
if, after passing to subsequences, there exists a commutative ladder diagram
\[
\xymatrix@C-18pt{
   G_{i_0} & & G_{i_1} \ar[ll] \ar[dl] & & G_{i_2} \ar[ll] \ar[dl] & \cdots \ar[l]  \\
   & H_{j_0} \ar[ul] & & H_{j_1} \ar[ul] \ar[ll] & & H_{j_2} \ar[ul] \ar[ll] & \cdots \ar[l]
}
\]
\end{defn}

\begin{exmp}[A cautionary example]
\label{exmp:Cautionary}
Consider the inverse sequence
\[
  \Z \xleftarrow[\qquad]{2}
  \Z \xleftarrow[\qquad]{2}
  \Z \xleftarrow[\qquad]{2}
  \cdots
\]
in which each homomorphism is given by multiplication by two.
Then the inverse limit is equal to the trivial group.
However the inverse sequence itself is not equivalent to the trivial inverse system, in which each group $G_i$ is the trivial group.
This inverse sequence arises as the fundamental group at infinity of a dyadic solenoid inverse mapping telescope (see Geoghegan \cite[Ex.~11.4.15]{Geoghegan08}).  See also Marde\v{s}i\'{c}--Segal \cite[Ex.~II.3.1]{MardesicSegal82},  for an alternative interpretation.
\end{exmp}

Because of the previous example, 
when studying the fundamental pro-group at infinity, one needs to keep track of the entire inverse sequence of groups rather than the single group obtained by taking the inverse limit.
In this example, the information contained in the inverse sequence of groups is lost when passing to the inverse limit.

If an inverse sequence $(G_i,p_i)$ is equivalent to a sequence in which all maps are monomorphisms then the inverse sequence is said to be \emph{pro-monomorphic}.
Similarly, we define the inverse sequence to be \emph{pro-epimorphic} if it is equivalent to a system whose maps are all epimorphisms.
The pro-epimorphic property is more commonly known as the \emph{Mittag-Leffler} property, and thus we use that terminology here.
This property appears implicitly in the classical proof of the Mittag-Leffler theorem of complex analysis (see \cite[\S II.3.5]{BourbakiTG}).
An inverse sequence is \emph{pro-trivial} if it is equivalent to a system in which each group $G_i$ is the trivial group. Thus in Example~\ref{exmp:Cautionary}, the inverse limit is trivial but the inverse sequence is not pro-trivial.   

When the inverse sequence in question is the fundamental pro-group at infinity of a space, the properties of inverse sequences defined above have simple geometric interpretations. We refer the reader to \cite[\S 3.4.8]{Guilbault14} for details.

\begin{thm}
\label{thm:GeometricFlavors}
The fundamental group at infinity of a one-ended space $X$ with respect to a base ray $r$ is
\begin{enumerate}
    \item pro-monomorphic if and only if
    there exists a compact set $C \subset X$ such that for every compact set $D$ containing $C$ there exists a compact set $E$ such that every loop in $X \setminus E$ that contracts in $X \setminus C$ contracts in $X \setminus D$,
    \item Mittag-Leffler if and only if
    for any compact set $C \subset X$ there exists a larger compact set $D$ such that for every still larger compact set $E$, each pointed loop in $X \setminus D$ based on $r$ can be homotoped into $X \setminus E$ via a homotopy in $X\setminus C$ that slides the base point along $r$, and
    \item pro-trivial if and only if for every compact set $C\subset X$, there exists a larger compact set $D$, such that every loop in $X\setminus D$ contracts in $X\setminus C$. 
\end{enumerate}
\end{thm}

By parts (1) and (3) of the above theorem, the properties of the fundamental pro-group at infinity being either pro-monomorphic or pro-trivial do not depend on the choice of base ray.
We now examine the Mittag-Leffler property in more detail and discuss its relation to semistability.

One can make precise the informal statement that no information is lost when passing to the limit of a Mittag-Leffler inverse sequence.
In fact, if one is given an exact sequence in which the terms are Mittag-Leffler inverse sequences, then the inverse limit is again an exact sequence. (But inverse limits in general need not preserve exactness.)
A simpler, related phenomenon is that a Mittag-Leffler inverse sequence with a trivial inverse limit must be pro-trivial.
From the point of view of homological algebra, we can interpret the above as evidence for the ``tameness'' of  Mittag-Leffler inverse sequences.
See Marde\v{s}i\'{c}--Segal \cite[\S II.6.2]{MardesicSegal82} for a more detailed discussion of this viewpoint.

The Mittag-Leffler property is closely related to semistability at infinity by the following result, which also implies that Mittag-Leffler does not depend on the choice of base ray.

\begin{thm}[\cite{Geoghegan08}, Prop.~16.1.2]
\label{thm:MittagLeffler}
Let $X$ be a one-ended, locally finite, simply connected CW complex, and let $r$ be a proper ray in $X$. Then $X$ is semistable at infinity if and only if the fundamental group at infinity with respect to the base ray $r$ is a Mittag-Leffler inverse sequence.
\end{thm}

By applying Theorem~\ref{thm:MittagLeffler}, one can show that the Whitehead $3$--manifold is not semistable at infinity.
The Whitehead $3$-manifold was the first known example of a contractible open $3$--manifold not homeomorphic to $\R^3$.
The following example sketches the proofs of these two facts. We refer the reader to Geoghegan \cite[\S 16.4]{Geoghegan08} for detailed proofs of the assertions below.

\begin{exmp}[Whitehead $3$--manifold]
Let $L \subset S^3$ be the Whitehead link, a two-component link such that each component is individually unknotted in $S^3$ and each component contracts to a point in the complement of the other---although the contraction requires the loop to pass through itself. 
A regular neighborhood of $L$ consists of two solid tori with boundary tori $T_0$ and $T_1$.
\begin{center}
\labellist\small\hair 2pt
\pinlabel $T_1$ at 112 15
\pinlabel $T_0$ at 5 5
\endlabellist
\includegraphics{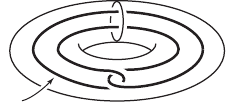}
\end{center}
Since these tori are unknotted, each separates $S^3$ into a pair of solid tori.
Let $M_0$ be the solid torus bounded by $T_0$ such that $M_0$ does not contain $T_1$.
Then there exists a homeomorphism $h\colon S^3 \to S^3$ such that $h(T_0)=T_1$ and $M_0 \subset h(M_0)$.
Consider the nested sequence of solid tori $M_0 \subset M_1 \subset M_2 \subset \cdots$ such that $M_{i+1} = h(M_i)$ for each $i\ge 0$.
The \emph{Whitehead $3$--manifold} is the union $W^3 = \bigcup M_i$.
Since $M_0$ contracts to a point in $M_1$, each $M_i$ contracts to a point in $M_{i+1}$.  Thus by compactness, every map $S^n \to W^3$ is nullhomotopic for each $n$, so that $W^3$ is contractible.

To see that $W^3$ is not homeomorphic to $\R^3$, we examine it from a different point of view.
Consider the compact $3$--manifold $C_i = \overline{M_i \setminus M_{i-1}}$.  Then $W^3$ decomposes as the following sequence of submanifolds (with pairwise disjoint interiors) glued along boundary tori:
\[
   W^3 = M_0 \cup_{T_0} C_1 \cup_{T_1} C_2 \cup_{T_2} C_3 \cup_{T_3} \cdots
\]
The end of $W^3$ has basic neighborhoods of infinity of the form
\[
   U_i = C_i \cup_{T_i} C_{i+1} \cup_{T_{i+1}} C_{i+2} \cup_{T_{i+2}} \cdots
\]
for each $i$.  Each $C_i$ is homeomorphic to $C_1$, the core of the Whitehead link complement, which is a compact $3$--manifold with boundary a pair of incompressible tori. 
In particular, the inclusions of $T_i$ into $C_i$ and $C_{i+1}$ induce injective maps on $\pi_1$ that are not surjective, which implies that $\pi_1(U_i)$ splits as a nontrivial amalgamated product, as follows.
If we let $H_i = \pi_1(C_i) \approx \pi_1(C_1)$, then
\[
   \pi_1(U_i) = H_i *_{\Z^2} \bigl( H_{i+1} *_{\Z^2} H_{i+2} *_{\Z^2} \cdots \bigr) = H_i *_{\Z^2} \pi_1(U_{i+1})
\]
In particular, $\pi_1(U_i)$ is a nontrivial group for all $i$.
It follows from the theory of graphs of groups that the map $\pi_1(U_{i+1}) \to \pi_1(U_{i})$ induced by inclusion is injective.
But since $\Z^2 \to H_i$ is not surjective, the map $\pi_1(U_{i+1}) \to \pi_1(U_{i})$ is not surjective (see, for instance, \cite{ScottWall79}).

The fundamental group at infinity is represented by the inverse sequence
\[
   \pi_1(U_1) \leftarrow \pi_1(U_2) \leftarrow \pi_1(U_3) \leftarrow \cdots
\]
in which each map is injective.  Therefore the fundamental group at infinity of $W^3$ is pro-monomorphic.
Observe that an inverse sequence of nontrivial groups in which all bonding maps are injective cannot be pro-isomorphic to a nontrivial inverse sequence in which all bonding maps are surjective.
In particular, such an inverse sequence is not pro-trivial.
Thus we conclude that $W^3$ is not homeomorphic to $\R^3$.

The argument above also establishes that the fundamental group at infinity does not have the Mittag-Leffler property.
By Theorem~\ref{thm:MittagLeffler}, we conclude that the contractible open $3$--manifold $W^3$ is not semistable at infinity.
\end{exmp}

The existence of the Whitehead manifold suggests the following question: is the Whitehead manifold the universal cover of a closed aspherical $3$--manifold?
More generally, which contractible manifolds arise as universal covers of closed aspherical manifolds?
F.E.A. Johnson conjectured that $\R^n$ is the only contractible manifold arising as such a universal cover (see \cite{Johnson71_AsphericalManifold}).
However, Davis proved that Johnson's conjecture is false by constructing, for each $n\ge 4$, a closed aspherical $n$--manifold whose universal cover is not homeomorphic to $\R^n$ \cite{Davis83}.

The original question regarding the Whitehead $3$--manifold was resolved by Myers \cite{Myers88}: the Whitehead $3$--manifold does not admit a covering space action by any nontrivial group. Wright proved the following theorem, which substantially generalizes Meyers' result.

\begin{thm}[\cite{Wright92}]
\label{thm:Wright}
Suppose $M$ is a contractible open $n$--manifold whose fundamental group at infinity is pro-monomorphic.
If $M$ admits a covering space action by any nontrivial group, then $M$ is homeomorphic to $\R^n$.
\end{thm}

In particular, the examples constructed by Davis are not pro-monomorphic.  On the other hand, it follows from work of Ancel--Siebenmann that the Davis examples are Mittag-Leffler (see \cite[Thm.~3.5.5]{Guilbault16} for a proof).
In fact, there are currently no known examples of a closed aspherical manifold whose universal cover is not Mittag-Leffler.
The following is a well-known open conjecture in geometric topology.

\begin{conj}[Semistability for manifolds]
\label{conj:SemistableManifold}
If $M$ is a closed, aspherical manifold, then $\tilde{M}$ is semistable at infinity.
\end{conj}

Furthermore, there are no known counterexamples to the following much more general conjecture due to Geoghegan and Mihalik mentioned in the introduction.

\begin{conj}[Semistability for finitely presented groups]
\label{conj:SemistableFP}
Let $X$ be any finite, connected $2$--dimensional CW complex.
Then the universal cover $\tilde{X}$ is semistable at infinity.
\end{conj}

If a finitely presented group $G$ is realized as the fundamental group of two different finite, connected $2$--complexes $X$ and $Y$, then 
$\tilde{X}$ is semistable at infinity if and only if $\tilde{Y}$ is (see \cite[\S 16.5]{Geoghegan08}). Thus semistability at infinity is an intrinsic property of the finitely presented group $G$.
More generally, semistability at infinity is a quasi-isometry invariant of finitely presented groups (see \cite[\S 18.2]{Geoghegan08}).

Semistability of finitely presented groups has implications for the algebraic structure of $H^2(G;\Z G)$.  The cohomology group $H^n(G;\Z G)$ is a coarse invariant, \emph{i.e.}, depending only on the quasi-isometry type of $G$, which encodes asymptotic information about the homology of $G$ in dimensions less than $n$
(see \cite[\S 5.1]{Roe_coarse} or \cite[\S 18.2]{Geoghegan08})
.  For example, $H^1(G;\Z G)$ detects the number of ends of $G$.
If $G$ is finitely presented, the abelian group $H^2(G;\Z G)$ is torsion free (see \cite[\S 13.7]{Geoghegan08}).  The following theorem due to Farrell gives more information.

\begin{thm}[\cite{Farrell74}, Cor.~5.2]
If $G$ is a finitely presented group that contains an infinite order element, then $H^2(G; \Z G)$ is either trivial, infinite cyclic, or not finitely generated.
\end{thm}

One might wonder whether $H^2(G;\Z G)$ is always free abelian.  Indeed, this question has been attributed to Hopf.  If $G$ is a Poincar\'e\ Duality group, such as the fundamental group of a closed aspherical manifold, then $H^2(G;\Z G)$ is either trivial or infinite cyclic.
But for arbitrary finitely presented groups, the answer to Hopf's question is unknown.  The following result due to Geoghegan--Mihalik relates Hopf's question to the semistability at infinity of $G$.

\begin{thm}[\cite{Geoghegan08}, Thm.~16.5.1]
If G is semistable at infinity then $H^2(G;\Z G)$ is free abelian.
\end{thm}

\section{Semistability in the $\CAT(0)$ setting}
\label{sec:Examples}

In this section, we discuss various interpretations of semistability in the $\CAT(0)$ setting.  
So far we have only discussed semistability in the settings of manifolds and locally finite CW complexes.
In order to include $\CAT(0)$ spaces in our discussion, we need to extend the class of spaces under consideration.

A metrizable space $X$ is an \emph{absolute neighborhood retract} or \emph{ANR} if whenever $X$ is embedded as a closed subspace of a metrizable space $Y$, there exists a neighborhood of $X$ in $Y$ that retracts onto $X$.
An \emph{absolute retract} or \emph{AR} is a contractible ANR.
A metrizable space $X$ is an ANR if $X$ is finite dimensional and locally contractible or if every point of $X$ has a neighborhood that is an ANR (see Hu \cite{Hu65}).
Consequently the family of ANRs includes all manifolds and all locally finite CW complexes.
Ontaneda has shown that every proper $\CAT(0)$ space is an absolute retract in \cite{Ontaneda05}.

Above we defined a group to be semistable at infinity if the universal cover of a presentation $2$--complex is semistable at infinity.  Semistability of a group $G$ does not depend on the choice of presentation $2$--complex.  More generally if $G$ acts properly  and cocompactly by homeomorphisms on one-ended, simply connected, locally compact ANRs $X$ and $Y$, then there exists a proper homotopy equivalence between $X$ and $Y$.  In particular, if $X$ is semistable at infinity, then so is $Y$. See Guilbault--Moran \cite{GuilbaultMoran_ANR} for details.
Therefore in order to determine whether a one-ended, finitely presented group $G$ is semistable at infinity, one can examine any simply connected, locally compact ANR on which $G$ acts properly and cocompactly by homeomorphisms.

Recall that a $\CAT(0)$ space $X$ is semistable at infinity if every pair of proper rays in $X$ are properly homotopic.  This definition uses only the topology of $X$ but not the $\CAT(0)$ metric.
The following result of Geoghegan--Swenson incorporates the $\CAT(0)$ geometry by showing that it suffices to consider pairs of geodesic rays.

\begin{thm}[\cite{GeogheganSwenson_Semistable}]
\label{thm:GeodesicRays}
A proper one-ended $\CAT(0)$ space $X$ is semistable at infinity if and only if each pair of geodesic rays in $X$ is properly homotopic.
\end{thm}

A $\CAT(0)$ space typically contains many proper rays that are not within a bounded distance of a geodesic ray.
For example one could consider a logarithmic spiral in the Euclidean plane.
The existence of such ``nongeodesic'' rays makes the proof of Theorem~\ref{thm:GeodesicRays} nontrivial.
Indeed, a priori it is not obvious that every proper ray is properly homotopic to a geodesic ray.
The proof of this fact depends on a nontrivial shape-theoretic result of Krasinkiewicz--Minc \cite{KrasinkiewiczMinc79}.

A proper $\CAT(0)$ space $X$ has a natural ``boundary at infinity,'' the \emph{visual boundary}, which encodes the geometry of geodesic rays in $X$.
Two geodesic rays $c,c'\colon [0,\infty) \to X$ are \emph{equivalent} if the function $d \bigl( c(t),c'(t) \bigr)$ is bounded above as $t\to\infty$.
The \emph{visual boundary} $\boundary X$ is the set of equivalence classes.
For any fixed basepoint $x_0$, each equivalence class is represented by a unique geodesic ray emanating from $x_0$.
Two rays are ``close'' if their representatives $c,c'$ based at $x_0$ have $d \bigl( c(T),c'(T) \bigr)$ ``small'' for a large value of $T$ (see \cite{Ballmann95} or \cite{BH99} for a more precise study of this topology).

By Theorem~\ref{thm:GeodesicRays}, establishing semistability in a $\CAT(0)$ space only requires one to connect an arbitrary pair of \emph{geodesic} rays $c_0,c_1$ by a proper homotopy.  However, the intermediate proper rays $c_t$ in this homotopy need not be geodesic.
When the rays $c_t$ in such a homotopy are geodesic, then 
there is a path in the boundary between $c_0$ and $c_1$. Conversely, if the boundary is path-connected, 
then the above criterion is easily satisfied.  

Since connected and locally connected implies path-connected, we have the following corollary. 

\begin{cor}
\label{cor:PathConnected}
A proper one-ended $\CAT(0)$ space $X$ is semistable at infinity if its visual boundary $\boundary X$ is path connected.
In particular, if $\boundary X$ is locally connected then $X$ is semistable at infinity.
\end{cor}

We will use the boundary to establish various examples of semistable $\CAT(0)$ groups.
In particular, we demonstrate examples with locally connected boundary, with non--locally connected but path connected boundary, and also with non-path connected boundary.

It is unknown whether all word hyperbolic groups are $\CAT(0)$.
However Bestvina--Mess have shown that the 
Rips complex may be used in place of a $\CAT(0)$ space to obtain the same conclusion as Corollary~\ref{cor:PathConnected} for all word hyperbolic groups (see \cite{BestvinaMess91}).
One concludes that one-ended hyperbolic groups are all semistable at infinity, since the Gromov boundary is always locally connected \cite{Swarup96,Levitt98,Bowditch99Treelike}.

Many $\CAT(0)$ groups do not admit locally connected visual boundaries (see, for example, \cite{MihalikRuane99,MihalikRuane01}).  In that case, the boundary may still be path connected.
For example, a theorem of Ben-Zvi states that the boundary of a one-ended $\CAT(0)$ group with isolated flats is always path connected \cite{Benzvi_PathCon}.  Thus all such groups are semistable at infinity.

Another example of this type is the group $F_2 \times \Z$, which acts on the product of a tree $T$ and a line $\R$.
A product of any two $\CAT(0)$ spaces $X\times Y$ is again a $\CAT(0)$ space (using the product metric) whose boundary is homeomorphic to the join $\boundary X * \boundary Y$ of the boundaries of the factors. In particular, the boundary of $T \times \R$ is homeomorphic to the suspension of $\boundary T$, in other words, it is the suspension of a Cantor set, which is not locally connected.
(Indeed, if $A$ is not locally connected, then the suspension of $A$ is also not locally connected, for any topological space $A$.)

More generally, by a theorem of Bowers--Ruane, if $F_2 \times \Z$ acts geometrically on any other $\CAT(0)$ space $Y$, the visual boundary $\boundary Y$ is always homeomorphic to the suspension of a Cantor set \cite{BowersRuane96}, so it is path connected but not locally connected.

The examples discussed so far all have path connected visual boundaries, and are semistable at infinity by Corollary~\ref{cor:PathConnected}. It follows from Corollary~\ref{cor:PathConnected}, that a $\CAT(0)$ group $G$ is semistable at infinity if $G$ acts properly, cocompactly, and isometrically on at least one $\CAT(0)$ space which has path connected visual boundary. 
However it is possible for a one-ended $\CAT(0)$ group to be semistable at infinity even when it does not admit a path connected visual boundary.

A well-known example to consider in this context is the following example due to Croke--Kleiner \cite{CrokeKleiner00}.
The \emph{Croke--Kleiner group} is the group $G$ defined by the presentation $\bigpresentation{a,b,c,d}{[a,b], [b,c], [c,d]}$.
The presentation $2$--complex $X$ of $G$ naturally has the structure of a nonpositively curved square complex, whose universal cover $\tilde{X}$ is a one-ended $\CAT(0)$ space.
The visual boundary $\boundary \tilde{X}$ is connected but not path connected.  (This result is implicit in the general theory of \cite{CrokeKleiner02} and was explicitly verified by Conner--Mihalik--Tschantz \cite{ConnerMihalikTschantz}.)
Although the techniques of \cite{CrokeKleiner02} apply to any $\CAT(0)$ space $Y$ on which $G$ acts geometrically, it is unclear whether the boundary $\boundary Y$ is not path connected in general.

However the Croke--Kleiner group  $G$ is semistable at infinity by the following combination theorem of Mihalik--Tschantz.

\begin{thm}[\cite{MihalikTschantz92}, Mihalik--Tschantz Combination Theorem]
\label{thm:MihalikTschantz}
Let $H$ split as a finite graph of groups $\mathcal{G}(H)$.
Assume all vertex groups are finitely presented and all edge groups are finitely generated.
If all vertex groups are semistable at infinity, then so is $H$.
\end{thm}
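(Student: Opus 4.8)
The plan is to pass to a geometric model built from $\mathcal{G}(H)$ and analyze its tree-of-spaces structure. First I note that, since the vertex groups are finitely presented and the edge groups are finitely generated, an amalgam or HNN extension of finitely presented groups over a finitely generated edge group is again finitely presented; hence $H$ is finitely presented, semistability is a well-defined invariant of $H$, and it may be checked on any convenient cocompact model. I would realize $\mathcal{G}(H)$ as a finite graph of spaces $X$: finite complexes $X_v$ with $\pi_1(X_v) = H_v$ for the vertices, joined by edge cylinders built from spaces $X_e$ with $\pi_1(X_e) = H_e$ mapping $\pi_1$-injectively into the incident vertex spaces. The universal cover $\tilde{X}$ then carries an $H$-equivariant projection $\pi \colon \tilde{X} \to T$ onto the Bass--Serre tree $T$, where preimages of vertices are copies of the universal covers $\tilde{X}_v$ (each semistable by hypothesis) and preimages of open edges deformation retract onto copies of edge spaces. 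The role of finite generation of the edge groups is to guarantee that these edge pieces are ``coarsely thin'': a loop crossing an edge space can be controlled using a fixed finite generating set of $H_e$, which is the crucial geometric input for the homotopy argument below.

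Next I would adopt the base-ray formulation of semistability. Fixing a proper base ray $\omega$, the group $H$ is semistable at infinity if and only if for every compact $C \subseteq \tilde{X}$ there is a compact $D \supseteq C$ such that every loop based on $\omega$ lying in $\tilde{X} - D$ is homotopic, keeping its basepoint on $\omega$ and staying within $\tilde{X} - C$, to a loop based on $\omega$ that is as far out as desired; equivalently, any two proper rays converging to the same end are properly homotopic. This reformulation localizes the condition to the behavior of loops, which interacts well with the tree decomposition.

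The core of the argument is then an induction on the number of edges of the finite graph underlying $\mathcal{G}(H)$, collapsing one edge at a time so as to reduce to the two base cases of an amalgamated product $A *_C B$ and an HNN extension $A *_C$. The collapse is legitimate because, as noted, each intermediate vertex group is an amalgam or HNN extension of finitely presented groups over a finitely generated edge group, hence finitely presented, and the base-case theorem (once established) shows it is again semistable. For the base cases I would argue directly in $\tilde{X}$: given two proper rays $r_0, r_1$ converging to the same end, I would first properly homotope each into a \emph{normal form}, so that its projection to $T$ either is eventually a geodesic ray or settles into a single vertex space, and so that $r_0$ and $r_1$ have the \emph{same} tree itinerary. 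One then builds the proper homotopy inductively along this itinerary, using semistability of the factor $A$ or $B$ to close up and push out the loops that lie inside each vertex piece $\tilde{X}_v$, and using finite generation of the edge group $C$ to transport each stage of the homotopy across the intervening edge spaces while tracking the basepoint along $\omega$.

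The step I expect to be the main obstacle is precisely this assembly: the local homotopies produced inside the individual vertex spaces must be patched into a single \emph{proper} homotopy of $\tilde{X}$, and the delicate point is to ensure that loops repeatedly crossing edge spaces (so that the end escapes in the tree direction) do not force the homotopy to reenter a fixed compact set infinitely often. Finite generation of the edge groups is exactly what controls this: a loop crossing an edge space can be homotoped within a uniformly thin neighborhood of that edge space, so the crossings can be organized into a ladder/staircase homotopy whose rungs march monotonically outward in $\tilde{X}$. Verifying the properness of this staircase, together with the bookkeeping needed to make the homotopies in adjacent vertex pieces agree along their common edge space, is where essentially all of the technical work resides.
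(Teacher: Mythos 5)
The paper does not actually prove this statement: it is imported wholesale from \cite{MihalikTschantz92}, where it is the main result of a book-length Memoir, so there is no internal argument to compare yours against. Judged on its own terms, your proposal is a strategy outline rather than a proof, and the gap is exactly where you say it is. Your reduction to the two base cases (amalgamated product and HNN extension) by collapsing one edge at a time is legitimate and standard --- each intermediate group is finitely presented and, granting the base case, semistable --- but those base cases \emph{are} the theorem; everything of substance is deferred to the final paragraph, which you yourself flag as unverified. A proof attempt that says the assembly of local homotopies into a proper homotopy is ``where essentially all of the technical work resides'' and then stops has not proved the combination theorem.

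Moreover, the one concrete mechanism you offer for that assembly is not correct as stated. You claim that finite generation of the edge group $C$ means ``a loop crossing an edge space can be homotoped within a uniformly thin neighborhood of that edge space.'' Finitely generated subgroups of finitely presented groups can be arbitrarily distorted: there is no uniform bound relating the intrinsic word metric on $C$ (or the geometry of the edge-space cover) to distances in $\tilde{X}$, so no ``uniformly thin'' neighborhood exists in general, and the ladder/staircase homotopy cannot be set up this way. What finite generation actually buys in the Mihalik--Tschantz argument is combinatorial control (compact edge spaces, hence locally finite configurations of their translates in $\tilde{X}$), and the properness bookkeeping must be done relative to filtrations by compacta rather than by metric thinness. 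Separately, your ``normal form'' step --- properly homotoping two proper rays converging to the same end so that they have the same tree itinerary, eventually geodesic in $T$ or trapped in one vertex space --- is also asserted without proof, and rays whose projections to $T$ wander non-properly are precisely the delicate case. So the proposal correctly identifies the architecture of a proof but leaves its load-bearing parts unbuilt, and the one technical idea meant to support them fails because of distortion.
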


Indeed, the Croke--Kleiner group splits as an amalgam
\[
   G = \langle{a,b,c}\rangle *_{\langle{b,c}\rangle}
   \langle b,c,d \rangle
\]
of two groups each isomorphic to $F_2\times \Z$.
As explained above $F_2 \times \Z$ is semistable at infinity, so the Croke--Kleiner group is as well.

For groups $G$ that act geometrically, \emph{i.e.}, properly, cocompactly, and isometrically on a $\CAT(0)$ space $X$, semistability has an equivalent shape-theoretic characterization in terms of the topology of the visual boundary $\boundary X$. Intuitively,  a $\CAT(0)$ space is semistable at infinity if it is ``locally connected at infinity'' in a sense that is made precise in slightly different ways in the following two theorems.

\begin{thm}[\cite{Krasinkiewicz77}, \cite{DydakSegal78} Thm.~7.2.3]
\label{thm:LocallyConnected}
Suppose $G$ is one ended and acts geometrically on a $\CAT(0)$ space $X$.
Then $G$ is semistable at infinity if and only if $\boundary X$ is shape equivalent to a locally connected continuum.
\end{thm}

We will not need to use the technical definition of shape equivalence in this section, so we defer the definition to Appendix~\ref{sec:ShapeTheory}.
Shape equivalence of compacta is weaker than homotopy equivalence.  For example the Cantor Hawaiian earring is not locally connected, but it is shape equivalent to the Sierpinski carpet and also to the Hawaiian earring, which are locally connected.
On the other hand, the dyadic solenoid is not shape equivalent to any locally connected space.

Although Theorem~\ref{thm:LocallyConnected} is expressed in the language of shape theory, it has an interpretation in the language of geometric group theory.  This interpretation involves the notion of a $\mathcal{Z}$--boundary of a space, a notion that first arose in infinite dimensional topology.  Bestvina--Mess demonstrate the significance of this notion in geometric group theory by applying it to the study of word hyperbolic groups and $\CAT(0)$ spaces \cite{BestvinaMess91,Bestvina96}.

\begin{defn}[$\mathcal{Z}$--set and $\mathcal{Z}$--boundary]
A closed subspace $A$ of a compact absolute retract $Y$ is a \emph{$\mathcal{Z}$--set} in $Y$ if for each $\epsilon>0$ there exists a map $f\colon Y \to Y\setminus A$ with $d(f,1_Y) < \epsilon$.
If $X$ is a locally compact AR, we say that $A$ is a \emph{$\mathcal{Z}$--boundary} of $X$ if $A$ is a $\mathcal{Z}$--set in a compact AR $Y$ such that $Y \setminus A$ is homeomorphic to $X$.

For example, if $X$ is a locally compact $\CAT(0)$ space then the visual boundary $\boundary X$ is a $\mathcal{Z}$--boundary of $X$.
Indeed, $Y = X \cup \boundary X$ has a natural visual topology such that $Y$ is a compact metrizable AR and  $\boundary X$ is a $\mathcal{Z}$--set in $Y$.
\end{defn}

\begin{prop}
\label{prop:LCZBoundary}
Suppose $G$ is one ended and acts geometrically on a $\CAT(0)$ space $X$.
Then $G$ is semistable at infinity if and only if $G$ acts geometrically on a $\CAT(0)$ space $X'$ that admits a locally connected $\mathcal{Z}$--boundary.
\end{prop}

The proof of Proposition~\ref{prop:LCZBoundary} can be found in Appendix~\ref{sec:ShapeTheory}.

\section{Hierarchies and relative hyperbolicity}
\label{sec:hierarchies}

By Mihalik--Tschantz's Combination Theorem (Theorem~\ref{thm:MihalikTschantz}) one can prove that a group is semistable at infinity by showing it splits as a graph of groups with semistable vertex groups.  One can reach a broad family of groups by considering graphs of groups whose vertex groups split again as a graph of groups.  In fact one can iterate this process many times.
Iterated graphs of groups lead to the notion of hierarchies of splittings, which we examine in this section.  In particular, we use this strategy to prove semistability at infinity for a large class of relatively hyperbolic groups using a hierarchy theorem of Louder--Touikan. 

\begin{defn}
A graph of groups splitting $\mathcal{G}(G)$ of a group $G$ is \emph{nontrivial} if $G$ is not contained in a vertex group.
If $\mathcal{A}$ is a family of subgroups of $G$, then a splitting is \emph{over $\mathcal{A}$} if every edge group is a member of the family $\mathcal{A}$.
A splitting $\mathcal{G}(G)$ is \emph{relative} to a family of subgroups $\mathcal{P}$ if each $P \in \mathcal{P}$ is contained in a conjugate of a vertex group of $\mathcal{G}(G)$.
\end{defn}

\begin{defn}
A \emph{hierarchy} $\mathcal{H}(G)$ of a group $G$ is a rooted tree of groups with $G$ at the root such that the descendants of a group $H \in \mathcal{H}(G)$ are the vertex groups of a nontrivial graph of groups decomposition $\mathcal{G}(H)$ of $H$.  A group $H \in \mathcal{H}(G)$ is \emph{terminal} if $H$ has no descendants.
A hierarchy is \emph{slender} if each of its graphs of groups $\mathcal{G}(H)$ is a splitting over slender groups.
A hierarchy is \emph{finite} if the underlying rooted tree is finite.
\end{defn}

An obvious corollary to Theorem~\ref{thm:MihalikTschantz} is the following Hierarchy Combination Theorem.

\begin{cor}
\label{cor:HierarchyCombination}
Suppose $G$ admits a finite hierarchy with all vertex groups finitely presented and all edge groups finitely generated.  If all terminal vertex groups are semistable at infinity then so is $G$.
\end{cor}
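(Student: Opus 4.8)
The plan is to show that \emph{every} group appearing in the hierarchy $\mathcal{H}(G)$ is semistable at infinity, propagating the hypothesis upward from the terminal groups to the root $G$. Since the underlying rooted tree is finite, each vertex group $H \in \mathcal{H}(G)$ has a well-defined \emph{height} $h(H)$, namely the length of the longest descending path from $H$ to a terminal group, so that terminal groups have height $0$. I would carry out the argument by induction on $h(H)$, with the Mihalik--Tschantz Combination Theorem (Theorem~\ref{thm:MihalikTschantz}) supplying the inductive step.

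For the base case, any group of height $0$ is terminal and is therefore semistable by hypothesis. For the inductive step, consider a non-terminal $H$ with $h(H) \ge 1$. By the definition of a hierarchy, the descendants of $H$ are exactly the vertex groups of a nontrivial graph of groups decomposition $\mathcal{G}(H)$, and because the tree is finite this is a \emph{finite} graph of groups. Each descendant has strictly smaller height than $H$, so by the inductive hypothesis every vertex group of $\mathcal{G}(H)$ is semistable; moreover the vertex groups are finitely presented and the edge groups are finitely generated by the standing hypotheses. All three hypotheses of Theorem~\ref{thm:MihalikTschantz} are thus met, and it follows that $H$ is itself semistable. Applying this conclusion at the root group yields semistability of $G$, as desired.

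Since the result is flagged as an obvious corollary, I do not expect a genuine obstacle so much as two points of bookkeeping that must be handled cleanly. The first is the verification that each decomposition $\mathcal{G}(H)$ is a finite graph of groups, which is precisely where finiteness of the hierarchy enters: a finite rooted tree forces every node to have only finitely many children. The second is finite presentability of the internal groups $H$, which is needed before the phrase ``$H$ is semistable'' is even meaningful; here one uses the standard fact that a group splitting as a finite graph of finitely presented vertex groups with finitely generated edge groups is automatically finitely presented, so that finite presentability propagates up the tree in lockstep with semistability and no extra hypothesis on $G$ itself is required. Beyond correctly organizing the induction on height so that Theorem~\ref{thm:MihalikTschantz} is invoked exactly once at each non-terminal node, I anticipate no real difficulty.
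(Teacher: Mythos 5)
Your proof is correct and is exactly the intended argument: the paper states this corollary without proof (calling it an ``obvious corollary'' of the Mihalik--Tschantz Combination Theorem), and your induction on height, applying Theorem~\ref{thm:MihalikTschantz} once at each non-terminal node, is precisely that obvious argument, with the bookkeeping about finiteness of each graph of groups and finite presentability handled correctly.
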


The preceding corollary requires knowledge of the finiteness properties of every vertex group in the hierarchy.  In some cases the following more restrictive result may be easier to apply, since it only requires knowledge of the edge groups.

\begin{cor}
\label{cor:SemistableHierarchy}
Suppose $G$ is finitely presented and admits a finite hierarchy with finitely presented edge groups.
If all terminal vertex groups are semistable at infinity, then so is $G$.
\end{cor}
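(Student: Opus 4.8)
The plan is to reduce the statement to Corollary~\ref{cor:HierarchyCombination}. That corollary requires every vertex group in the hierarchy to be finitely presented and every edge group to be finitely generated, whereas here I am handed only that the root $G$ is finitely presented and that all edge groups are finitely presented. Thus the entire task is to upgrade the single hypothesis on $G$ into finite presentability of \emph{every} group occurring in the hierarchy. Once that is done the conclusion is immediate: the edge groups are finitely presented, hence finitely generated; the terminal vertex groups are semistable by hypothesis; and so all the hypotheses of Corollary~\ref{cor:HierarchyCombination} hold, which then yields semistability of $G$.

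The crux is the following finiteness lemma: \emph{if a finitely presented group $H$ acts cocompactly on a tree} (equivalently, splits as a finite graph of groups) \emph{with all edge stabilizers finitely presented, then every vertex stabilizer is finitely presented.} This is the converse, for the finiteness property $F_2$, of the combination principle underlying Theorem~\ref{thm:MihalikTschantz}; it is proved by analyzing the action of $H$ on its Bass--Serre tree, for instance through the associated Mayer--Vietoris sequence, where finite presentability of the edge groups together with finite presentability of $H$ forces finite presentability of the vertex groups. I expect this lemma, rather than the surrounding bookkeeping, to be the main obstacle. The delicate point is that the edge-group hypothesis genuinely cannot be relaxed to finite generation: finiteness properties need not descend to vertex groups of a splitting unless one controls the edge groups, as is already visible at the level of finite generation in the decomposition of $\Z \wr \Z$ as the mapping torus of the shift automorphism of $\bigoplus_{\Z} \Z$, where a finitely generated group has a non-finitely-generated vertex group. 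This is exactly why Corollary~\ref{cor:SemistableHierarchy} imposes finite presentability on the edge groups.

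Granting the lemma, I would finish by an induction on the finite rooted tree underlying $\mathcal{H}(G)$, proving that every group in $\mathcal{H}(G)$ is finitely presented. The root $G$ is finitely presented by hypothesis. If a non-terminal $H \in \mathcal{H}(G)$ has already been shown to be finitely presented, then by definition its children are the vertex groups of a finite graph of groups decomposition $\mathcal{G}(H)$ whose edge groups are finitely presented; applying the lemma to this decomposition shows that each child of $H$ is finitely presented. Since the underlying tree is finite, the induction terminates and establishes finite presentability of every node, in particular of every vertex group appearing in any $\mathcal{G}(H)$.

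With this in hand the proof concludes as indicated in the first paragraph. Every vertex group in the hierarchy is finitely presented by the induction just completed, every edge group is finitely presented and hence finitely generated, and all terminal vertex groups are semistable at infinity by hypothesis. These are precisely the hypotheses of Corollary~\ref{cor:HierarchyCombination}, which therefore applies and gives that $G$ is semistable at infinity.
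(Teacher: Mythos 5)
Your proposal is correct and follows essentially the same route as the paper: establish that every vertex group in the hierarchy is finitely presented by induction, then invoke Corollary~\ref{cor:HierarchyCombination}. The key lemma you identify as the main obstacle --- that a finitely presented group splitting as a finite graph of groups with finitely presented edge groups has finitely presented vertex groups --- is exactly what the paper uses, except that it is simply cited as a known result (Lemma~1.1 of \cite{Bowditch99Connectedness}) rather than proved.
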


\begin{proof}
If $H$ is finitely presented and splits as a graph of groups $\mathcal{G}(H)$ with finitely presented edge groups then every vertex group of $\mathcal{G}(H)$ is finitely presented (see \cite[Lem.~1.1]{Bowditch99Connectedness}).
The group $G$ is finitely presented by hypothesis.
By induction on the levels of the hierarchy, it follows that each vertex group in the hierarchy is finitely presented.
By Corollary~\ref{cor:HierarchyCombination}, we are done.
\end{proof}

In order to apply Corollary~\ref{cor:SemistableHierarchy}, one needs a family of groups for which finite hierarchies are known to exist and terminate in groups that are semistable at infinity.
In fact, finite hierarchies are known to exist for the classes of one-relator groups, three-manifold groups, limit groups, and hyperbolic cubulated groups (see Wise \cite{WiseHierarchies} for more details on these families of groups and their hierarchies).  A result of Louder--Touikan shows that certain relatively hyperbolic groups admit finite hierarchies, as described below.  We begin with the definition of relative hyperbolicity. 

\begin{defn}[Relatively hyperbolic]
A graph is \emph{fine} if for each positive integer $n$, each edge is contained in only finitely many circuits of length $n$.
If $G$ is a group and $\mathbb{P}$ is a finite collection of infinite subgroups of $G$, 
we say that $(G,\mathbb{P})$ is \emph{relatively hyperbolic} if $G$ acts cocompactly on a fine $\delta$--hyperbolic graph $K$ such that each edge stabilizer is finite and the set of all conjugates of members of $\mathbb{P}$ is equal to the set of all infinite vertex stabilizers.

A subgroup of $G$ is \emph{parabolic} if it has a conjugate that lies in a member of $\mathbb{P}$.
A subgroup is \emph{elementary} if it is either finite, two-ended, or parabolic.
\end{defn}

\begin{exmp}
The simplest examples of fine graphs are trees and locally finite graphs.
If $G$ is word hyperbolic, one sees that $(G,\emptyset)$ is relatively hyperbolic by examining the action on the Cayley graph for any finite generating set.
If $G$ is finitely generated and multi-ended, then Stallings' Theorem states that $G$ acts cocompactly on a simplicial tree $T$ such that every edge stabilizer is finite (see, for example, \cite{ScottWall79}).
Therefore $(G,\mathbb{P})$ is relatively hyperbolic, where $\mathbb{P}$ is a set of representatives of the vertex stabilizers.

By a theorem of Farb \cite{Farb98}, if $M$ is a complete, finite volume manifold with pinched negative sectional curvature, then $\bigl(\pi_1(M),\mathbb{P}\bigr)$ is relatively hyperbolic, where $\mathbb{P}$ is the family of fundamental groups of the cusps of $M$.
Farb proves relative hyperbolicity by constructing a fine hyperbolic graph known as the coned-off Cayley graph.
\end{exmp}

\begin{rem}[Slender $\Longrightarrow$ elementary]
\label{rem:SlenderElementary}
By the Tits Alternative, every nonelementary subgroup of a relatively hyperbolic group contains a copy of the free group of rank two \cite[\S 8.2.F]{Gromov87}.
Since slender groups do not contain $F_2$, all slender subgroups of a relatively hyperbolic group must be elementary.
\end{rem}

A relatively hyperbolic group $(G,\mathbb{P})$ is \emph{atomic} if $G$ is one ended, every peripheral subgroup $P \in \mathbb{P}$ is one ended, and $G$ does not split over a parabolic subgroup. 

\begin{thm}[\cite{MihalikSwenson}]
\label{thm:MihalikSwenson}
Non-elementary atomic relatively hyperbolic groups are semistable at infinity.
\end{thm}

The atomic groups are in a sense the simplest possible type of relatively hyperbolic group.  
A study of hierarchies of relatively hyperbolic groups is necessary to understand the general case.  Recent work of Louder--Touikan \cite{LouderTouikan17} provides such an understanding for splittings over slender groups.  For technical reasons the results of Louder--Touikan only apply under the assumption that the group has no non-central element of order two.

\begin{thm}[\cite{LouderTouikan17}, Cor.~2.7]
\label{thm:LouderTouikan}
Suppose $(G,\mathbb{P})$ is relatively hyperbolic and $G$ is finitely generated with no non-central element of order two.
Then there exists a finite hierarchy $\mathcal{H}(G)$ with the following properties.
Every edge group is slender and elementary in $(G,\mathbb{P})$.
Every terminal vertex group $H$ has a relatively hyperbolic structure $(H,\mathbb{Q})$ such that each $Q \in \mathbb{Q}$ is infinite but not two-ended and $H$ has no nontrivial splittings over slender subgroups relative to $\mathbb{Q}$. 
Furthermore each $Q$ is parabolic in the original peripheral structure $(G,\mathbb{P})$.
\end{thm}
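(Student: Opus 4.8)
The plan is to build the hierarchy by iterating splittings over slender subgroups and then to prove that this iteration terminates, which is the crux of the matter. Since $(G,\mathbb{P})$ is relatively hyperbolic, the Tits alternative recorded above shows that every slender subgroup is elementary, so a splitting over a slender subgroup is a splitting over a finite, two-ended, or parabolic subgroup. I would construct the rooted tree recursively: place $G$ at the root, and at each node $H$ equipped with an inherited relatively hyperbolic structure $(H,\mathbb{Q}_H)$, ask whether $H$ admits a nontrivial splitting over a slender subgroup relative to $\mathbb{Q}_H$. If it does, choose a finite (reduced) graph of groups decomposition $\mathcal{G}(H)$ over slender subgroups in which every member of $\mathbb{Q}_H$ is elliptic, and declare its vertex groups to be the children of $H$; if it does not, then $H$ is terminal. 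The peripheral family of each child is obtained by restriction: a peripheral subgroup of the parent stabilizes a vertex and hence lies, up to conjugacy, in a single vertex group, and one checks, using the same inheritance mechanism that underlies Theorem~\ref{thm:Bowditch}, that each vertex group carries a natural relatively hyperbolic structure.

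The main obstacle is finiteness of the resulting tree, i.e.\ \emph{strong accessibility}. Finite branching at each node is the easier half: a splitting over elementary subgroups of a one-ended relatively hyperbolic group has only finitely many vertex groups, by an accessibility argument in the spirit of Theorem~\ref{thm:Bowditch} together with the fact that relatively hyperbolic groups are finitely presented relative to $\mathbb{P}$. The depth bound is the genuinely hard part. I would attach to each relatively hyperbolic pair a complexity taking values in a well-ordered set, built from invariants such as the topology of the Bowditch boundary $\boundary(H,\mathbb{Q}_H)$ and the structure of its slender JSJ decomposition, and prove that this complexity strictly decreases when passing from a node to any of its proper children. Strict descent plus well-ordering excludes infinite descending branches, and K\"onig's lemma upgrades finite branching together with bounded depth to finiteness of the whole tree. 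This descent argument is precisely the content of Louder--Touikan's strong accessibility theorem, and it is where the hypothesis of no non-central element of order two is essential, since the slender JSJ machinery and the associated complexity are only well-behaved once infinite dihedral and related subgroups are ruled out.

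Finally, I would verify the asserted properties. Every edge group is slender by construction and hence elementary by the Tits alternative. For the terminal groups, the recursion halts exactly when there is no slender splitting relative to the current peripheral structure, which is the stated non-splitting conclusion. To ensure that each terminal peripheral subgroup $Q$ is infinite but not two-ended, I would at each node work with the minimal relatively hyperbolic structure, consisting only of the large parabolics: whenever a splitting introduces a two-ended subgroup (parabolic or not) as a peripheral subgroup of a vertex group, it is removed before continuing, an operation whose necessity is exactly what Example~\ref{exmp:Necklaces} illustrates. Parabolicity of each terminal $Q$ in the original structure $(G,\mathbb{P})$ then follows by induction down the hierarchy, since the inheritance of relatively hyperbolic structures carries parabolic subgroups to parabolic subgroups, so no terminal peripheral subgroup can be anything other than one that was already parabolic in $(G,\mathbb{P})$.
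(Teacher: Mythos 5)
You should note at the outset that the paper contains no proof of this statement: Theorem~\ref{thm:LouderTouikan} is imported verbatim as Corollary~2.7 of \cite{LouderTouikan17} and used as a black box. So the only thing your sketch can be compared against is the external result it attempts to reconstruct, and judged as a standalone proof it has a genuine gap precisely at the point that carries all of the mathematical weight. The outer architecture you describe---iterate splittings over slender (hence, by the Tits alternative, elementary) subgroups relative to inherited peripheral structures, discard two-ended peripheral subgroups before recursing, and verify the listed properties of the terminal groups---is the routine part, and your treatment of it is broadly consistent with how such hierarchies are set up. The crux is termination. There you posit ``a complexity taking values in a well-ordered set, built from invariants such as the topology of the Bowditch boundary \ldots and the structure of its slender JSJ decomposition,'' assert strict descent, and then state that this descent argument ``is precisely the content of Louder--Touikan's strong accessibility theorem.'' That last sentence makes the argument circular: strong accessibility of slender hierarchies \emph{is} the theorem being proved; everything else in your proposal is bookkeeping around it. No complexity is actually defined, and no descent is actually proven.

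The gap is not a formality, because the difficulty is real and is exactly what this paper is organized around. Example~\ref{exmp:Necklaces} (and the iterated construction following it) shows that peripheral-splitting hierarchies of relatively hyperbolic groups can have arbitrarily large finite depth, with vertex groups that are again non-atomic relatively hyperbolic groups of the same general type. Consequently no crude invariant---number of ends, connectedness or cut-point structure of the Bowditch boundary, size of a JSJ decomposition---can strictly decrease at every level, and the authors' own earlier error (the incorrect claim, mentioned in the introduction, that such hierarchies have at most one level) is a cautionary instance of underestimating this. A correct termination proof requires the genuinely delicate machinery of \cite{LouderTouikan17} (in the tradition of Delzant--Potyagailo complexity for hierarchies), and the no-non-central-2-torsion hypothesis enters there in a precise technical way, not merely as the heuristic about dihedral subgroups you gesture at. In short: either one proves strong accessibility, which you have not done, or one cites it, which is what the paper does; your proposal does neither cleanly, since it cites the theorem's core as a sub-step of a proof of that same theorem.
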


The proof of Theorem~\ref{thm:MainThm} depends on the following elementary result of Bowditch on splittings of relatively hyperbolic groups.

\begin{prop}[\cite{Bowditch01}, Prop.~5.2]
\label{prop:BowditchNoSplitting}
Let $(G,\mathbb{P})$ be relatively hyperbolic.
Suppose each peripheral subgroup $P\in \mathbb{P}$ is one ended.
If $G$ does not split over a parabolic subgroup relative to $\mathbb{P}$,
then $G$ does not admit any splitting over a parabolic subgroup \textup{(}without regard to whether the splitting is relative to $\mathbb{P}$\textup{)}.
\end{prop}

\begin{proof}[Proof of Theorem~\ref{thm:MainThm}]
By Remark~\ref{rem:SlenderElementary}, all slender subgroups of $G$ are elementary. Therefore the hierarchy of $G$ given by Theorem~\ref{thm:LouderTouikan} must involve only elementary edge groups---i.e., finite, two-ended, or parabolic.
By hypothesis, all parabolic subgroups are finitely presented.  Thus all edge groups of the hierarchy are finitely presented.
Since each $P \in \mathbb{P}$ is finitely presented, $G$ is as well, by \cite{Osin06}.

In order to apply Corollary~\ref{cor:SemistableHierarchy} we will show that all terminal vertex groups $H$ are atomic with respect to the peripheral structure $(H,\mathbb{Q})$ given by Theorem~\ref{thm:LouderTouikan}.
We verify that claim below.  The theorem then follows easily from this claim.
Indeed recall that atomic groups are one ended, by definition.
Therefore if a terminal vertex group is elementary, it must be parabolic.  By hypothesis all parabolic subgroups are semistable.  In the non-elementary case, we obtain semistability by applying Theorem~\ref{thm:MihalikSwenson}.

We proceed with the proof of the claim.
The hierarchy terminates with relatively hyperbolic groups $(H,\mathbb{Q})$ such that each $Q \in \mathbb{Q}$ is infinite but not two-ended and each is a subgroup of a $P\in\mathbb{P}$.  By hypothesis all parabolic subgroups of $(G,\mathbb{P})$ are slender and coherent, therefore so are the parabolic subgroups of $(H,\mathbb{Q})$.  
Note that an infinite group $Q$ that is slender and coherent and not two-ended must be one ended. Thus each $Q \in \mathbb{Q}$ is one ended.

Additionally the terminal vertex groups $(H,\mathbb{Q})$ admit no slender splittings relative to $\mathbb{Q}$.
By hypothesis all elementary subgroups are slender, so $H$ admits no elementary splitting relative to $\mathbb{Q}$.
In particular, $H$ does not split over a finite group relative to $\mathbb{Q}$. Since each $Q \in \mathbb{Q}$ is one ended, it follows that $H$ cannot split over a finite group. Indeed, any nontrivial splitting of $H$ over a finite group would induce such a nontrivial splitting of some $Q$, which is impossible.  Thus $H$ is one ended.

Since $(H,\mathbb{Q})$ has no elementary splitting relative to $\mathbb{Q}$, we see that $(H,\mathbb{Q})$ has no nontrivial splitting over a parabolic subgroup relative to $\mathbb{Q}$.
Since each $Q$ is one ended, it follows from Proposition~\ref{prop:BowditchNoSplitting} that $H$ does not have any splitting over a parabolic subgroup.
Therefore the hierarchy terminates with atomic relatively hyperbolic groups, establishing the claim.
\end{proof}

\appendix
\section{Shape theory and boundaries of $\CAT(0)$ groups}
\label{sec:ShapeTheory}

This appendix contains the proof of Proposition~\ref{prop:LCZBoundary}, as well as a brief introduction to the ideas from shape theory involved in the proof.

\begin{defn}[Shape equivalence]
Let $Q = \prod_{k=1}^\infty [0,1/k]$ denote the Hilbert cube, which is a compact $\CAT(0)$ space when endowed with the $\ell^2$ metric.
Every compact metric space $A$ embeds as a $\mathcal{Z}$--set in the Hilbert cube $Q$.  Indeed $Q=[0,1]\times Q'$, where $Q'$ is homeomorphic to $Q$. The Urysohn metrization theorem gives an embedding of $A$ into the subspace $\{1\}\times Q'$ of $Q$ whose image is a $\mathcal{Z}$--set in $Q$.
All $\mathcal{Z}$--set embeddings of a given compactum $M$ in $Q$ have homeomorphic complements (see \cite{DydakSegal78}).

Chapman's Complement Theorem \cite{Chapman_Lectures}, states that shape equivalence of compact metric spaces may be characterized in the following manner, which we take as a definition:  Two metric compacta $M$ and $M'$ are \emph{shape equivalent} if they admit $\mathcal{Z}$--set embeddings into $Q$ such that the complements $Q \setminus M$ and $Q \setminus M'$ are homeomorphic.
A more geometric interpretation of shape equivalence is discussed in \cite[\S 17.7]{Geoghegan08}.
\end{defn}

For convenience, we repeat the statement of Proposition~\ref{prop:LCZBoundary}.

\begin{PropLCZBoundary}
Suppose $G$ is one ended and acts geometrically on a $\CAT(0)$ space $X$.
Then $G$ is semistable at infinity if and only if $G$ acts geometrically on a $\CAT(0)$ space $X'$ that admits a locally connected $\mathcal{Z}$--boundary.
\end{PropLCZBoundary}

The strategy used in the following proof was suggested to the authors by Craig Guilbault.

\begin{proof}
The reverse implication follows immediately from Theorem~\ref{thm:LocallyConnected}, so we focus on the forward implication.
As mentioned above, the visual boundary $\boundary X$ is a $\mathcal{Z}$--set in the compactification $Y = X \cup \boundary X$.
If $Y$ is any compact AR, then $Y \times Q$ is homeomorphic to $Q$.
Indeed a theorem of Edwards states that if $Y$ is any ANR then $Y \times Q$ is a Hilbert cube manifold \cite{Chapman_Lectures}, and a theorem of Chapman states that any compact contractible Hilbert cube manifold is homeomorphic to $Q$ \cite{Chapman_Lectures}.
Since $\boundary X$ is a $\mathcal{Z}$--set in $Y = X \cup \boundary X$, it follows that $\boundary X \times Q$ is a $\mathcal{Z}$--set in $Y \times Q = Q$.
Since they are homotopy equivalent, $\boundary X$ and $\boundary X \times Q$ are shape equivalent.
By Theorem~\ref{thm:LocallyConnected}, if $G$ is semistable at infinity then $\boundary X$ is shape equivalent to a locally connected continuum $A$.
If we choose any $Z$--set embedding $A \to Q$, shape equivalence implies that the complement $Q \setminus A$ is homeomorphic to $(Y\times Q) \setminus (\boundary X \times Q) = X \times Q$.
Therefore $A$ is a locally connected $\mathcal{Z}$--boundary of $X \times Q$.
To complete the proof of the forward implication, we extend the action of $G$ to the $\CAT(0)$ space $X \times Q$ by letting $G$ act trivially on the compact $Q$ factor.
\end{proof}

\bibliographystyle{alpha}
\bibliography{chruska.bib}

\end{document}